\documentclass[10pt]{amsart}
\usepackage{tipa}
\usepackage{amssymb}
\usepackage{stmaryrd}
\usepackage{amsmath,amsfonts,amsthm,mathrsfs,bbm,array,subfigure}

\newtheorem{theorem}{Theorem}[section]
\newtheorem{lemma}[theorem]{Lemma}

\theoremstyle{definition}

\newtheorem{question}[theorem]{Question}

\theoremstyle{remark}

\numberwithin{equation}{section}

%    Absolute value notation

%    Blank box placeholder for figures (to avoid requiring any
%    particular graphics capabilities for printing this document).

\begin{document}

\title[On the reducibility of some special quadrinomials]{On the reducibility of some special quadrinomials}

\author{Yong Zhang}
%    Address of record for the research reported here
\address{School of Mathematics and Statistics, Changsha University of Science and Technology,
Changsha 410114, People's Republic of China}

 \email{zhangyongzju$@$163.com}
%    \thanks will become a 1st page footnote.

\thanks{The first author was supported by China National Natural Science Foundation Grant (No.~11501052). The second author was partly supported by China National Natural Science
Foundation Grant (No. 11501477), the Fundamental Research Funds for
the Central Universities and the Science Fund of Fujian Province
(No. 2015J01024).}

\author{Huilin Zhu}
%    Address of record for the research reported here
\address{School of Mathematical Sciences, Xiamen University, Xiamen 361005, People's Republic of China}

 \email{hlzhu$@$xmu.edu.cn}
%    \thanks will become a 1st page footnote.

%    General info
\subjclass[2010]{Primary 13P05; Secondary 11G30, 11G05}

\date{}

\keywords{quadrinomial, elliptic curve, hyperelliptic curve,
Chabauty's method.}

\begin{abstract}
Let $n>m>k$ be positive integers and let $a,b,c$ be nonzero rational
numbers. We consider the reducibility of some special quadrinomials
$x^n+ax^m+bx^k+c$ with $n=4$ and 5, which related to the study of
rational points on certain elliptic curves or hyperelliptic curves.

\end{abstract}

\maketitle

\section{Introduction}

Let $f(x)$ be trinomial or quadrinomial with $deg(f)\geq3$ and
rational coefficients, there are many authors investigated the
factorizations of $f(x)$, i.e.,
\begin{equation}
f(x)=f_1(x)f_2(x)\cdots f_k(x),
\end{equation}
where $f_i(x) \in \mathbb{Q}[x]$ with $1\leq deg(f_i)<deg(f)$,
$i=1,\cdots,k$. We can refer to
\cite{BremnerUlas1,BremnerUlas2,Fried-Schinzel,Hajdu-Tijdeman,Jankauskas,Ljunggren,Schinzel-1993,Schinzel-2000,Schinzel-2001,Schinzel-2007}.

A polynomial $f(x)$ with rational coefficients is \emph{primitive
reducible} if it is reducible but $f(x^{1/l})$ is not reducible for
any integer $l\geq2$. At the West Coast Number Theory conference in
2007, P.G. Walsh \cite{West} posed ``Is there a primitive reducible
polynomial of the form $x^i + x^j + x^k + 4$ with $0 < k < j < i$
and $i > 17$?" and ``Is there a primitive reducible polynomial of
the form $x^i + x^j + x^k + a$ with $0 < k < j < i$, $a\in
\mathbb{Z}$ and $a > 4$, and not divisible by a linear or quadratic
polynomial?"

Let $f(a,x)=x^n+x^m+x^k+a,n>m>k\geq1,a\in\mathbb{Q}$. In 2010, J.
Jankauskas \cite{Jankauskas} proved the only primitive quadrinomial
$f(4,x)$, such that $f(4,x^l)$ is reducible for some $l>1$, is $x^4
+ x^3 + x^2 + 4$. He also obtained some examples of reducible
quadrinomial $x^n+x^m+x^k+a$ with $a\in \mathbb{Z}$ and $a<-5$, such
that all the irreducible factors of $f(a,x)$ are of degree $\geq3.$
In 2015, A. Bremner and M. Ulas \cite{BremnerUlas2} studied the
reducible of quadrinomials $f(a,x)$ with $4\leq n\leq 6$ in a more
systematic way and gave further examples of reducible $f(a, x), a
\in \mathbb{Q},$ such that all the irreducible factors with degree
$\geq 3$.

Now we investigate the divisibility of the following quadrinomial
\[f_{n,m,k}(a,b,c,x)=x^n+ax^m+bx^k+c,n>m>k\geq1,n>3,\]
by the quadratic polynomial $x^2+px+q,p^2+q^2\neq0$, where $a,b,c$
are nonzero rational numbers and $p,q$ are rational numbers. In
fact, M. Fried and A. Schinzel \cite{Fried-Schinzel} studied the
(ir)reducibility of general quadrinomials in 1972. Here, we care
about the cases with $a,b,c$ are special forms such that this
problem can be reduced to the study of rational points on certain
elliptic curves or hyperelliptic curves with genus 1 or 2. Noting
that the reducibility of $f_{n,m,k}(a,b,c,x)$ is invariant under the
transformations $f(x)\mapsto x^nf(1/x)$ and $f(x) \mapsto f(x)/c$,
we just consider the cases
\[x^n+ax^m+x^k+1,x^n+ax^m+ax^k+1,x^n+ax^m+x^k+a,x^n+x^m+ax^k+a\]
of $f_{n,m,k}(a,b,c,x)$ for $n=4$ and 5.

\section{Reducible quadrinomials with degree 4}

In this section we study the reducibility of the quadrinomial
$f_{4,m,k}(a,b,c,x)=x^4+ax^m+bx^k+c$ for four special cases
$(a,b,c)=(a,1,1),(a,a,1),(a,1,a),(1,a,a).$ For the briefness of the
discussion, we introduce the following two lemmas.

\begin{lemma}\label{Lem2.1} $($\cite[Lemma 3.1]{BremnerUlas2}$)$ Let $n\in \mathbb{N}$. Then we have
\[x^n \mod (x^2 + px + q) = A_n(p, q)x + B_n(p, q),\]
where $A_0(p, q) = 0, A_1(p, q) = 1, B_0(p, q) = 1, B_1(p, q) = 0,$
and where for $n\geq2$:
\[\begin{split}
&A_n(p, q) = -pA_{n-1}(p, q) -qA_{n-2}(p, q),\\
&B_n(p, q) = -pB_{n-1}(p, q)-qB_{n-2}(p, q).
\end{split}\]
\end{lemma}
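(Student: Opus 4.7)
The plan is to prove this by strong induction on $n$, using the key observation that $x^n \equiv -p x^{n-1} - q x^{n-2} \pmod{x^2 + px + q}$ for all $n \geq 2$. This congruence holds because
\[
x^n + p x^{n-1} + q x^{n-2} = x^{n-2}(x^2 + px + q)
\]
is obviously divisible by $x^2 + px + q$. Once this is in hand, both sides of the claimed identity will be shown to satisfy the same second-order linear recurrence with the same initial data.

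First I would check the base cases $n = 0$ and $n = 1$ directly: $x^0 = 1$ equals $0 \cdot x + 1 = A_0 x + B_0$, and $x^1 = x$ equals $1 \cdot x + 0 = A_1 x + B_1$. These agree with the specified initial values. For the inductive step, fix $n \geq 2$ and assume by induction that
\[
x^{n-1} \equiv A_{n-1}(p,q) x + B_{n-1}(p,q) \pmod{x^2 + px + q}
\]
and similarly for $x^{n-2}$. Substituting these into the congruence $x^n \equiv -p x^{n-1} - q x^{n-2}$ and collecting the coefficients of $x$ and the constant term gives
\[
x^n \equiv \bigl(-p A_{n-1}(p,q) - q A_{n-2}(p,q)\bigr) x + \bigl(-p B_{n-1}(p,q) - q B_{n-2}(p,q)\bigr).
\]
By the defining recurrences for $A_n$ and $B_n$, the right-hand side is exactly $A_n(p,q) x + B_n(p,q)$, completing the induction.

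There is no real obstacle here; the only point requiring any care is to make sure both sequences are anchored at the same pair of initial values and then to quote the recurrence $x^n \equiv -p x^{n-1} - q x^{n-2}$ as the mechanism that propagates the identity. An alternative but essentially equivalent route would be to multiply the $(n-1)$-case by $x$, obtain $x^n \equiv A_{n-1} x^2 + B_{n-1} x$, reduce $x^2 \equiv -px - q$, and then re-express the resulting coefficients via the $(n-2)$-case; this gives the same recurrences but is slightly less symmetric.
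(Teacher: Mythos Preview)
Your proof is correct; the induction is the standard and essentially only way to establish this recurrence. Note that the paper itself does not prove Lemma~\ref{Lem2.1} at all---it is quoted from \cite[Lemma 3.1]{BremnerUlas2}---so there is no ``paper's own proof'' to compare against, but your argument is exactly the one that underlies the cited result.
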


By the Corollary 3.2 of \cite{BremnerUlas2}, we have

\begin{lemma}\label{Lem2.2} If $f_{n,m,k}(a,b,c,x)$ is divisible by $x^2 + px + q$ for
some $a,b,c,p,q$, then
\begin{equation}\label{Eq21}
\begin{split}
&A_n(p, q) +aA_m(p, q)+bA_k(p, q)=0,\\
&c=-B_n(p, q)-aB_m(p, q)-bB_k(p, q).
\end{split}
\end{equation}
\end{lemma}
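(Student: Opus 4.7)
The plan is to reduce the polynomial $f_{n,m,k}(a,b,c,x)$ modulo $x^2+px+q$ term by term using Lemma~\ref{Lem2.1}, collect the coefficients of the resulting linear remainder, and demand that they vanish. Divisibility of a polynomial by a monic polynomial $g$ is equivalent to the remainder upon division by $g$ being the zero polynomial, so this reduction completely captures the divisibility hypothesis.

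First I would apply Lemma~\ref{Lem2.1} to each of the monomials $x^n$, $x^m$, $x^k$ appearing in $f_{n,m,k}(a,b,c,x)$. Since reduction modulo $x^2+px+q$ is $\mathbb{Q}$-linear, and the constant $c$ (a polynomial of degree $0<2$) is its own remainder, assembling the pieces yields
\[
x^n + ax^m + bx^k + c \equiv \bigl(A_n(p,q)+aA_m(p,q)+bA_k(p,q)\bigr)x + \bigl(B_n(p,q)+aB_m(p,q)+bB_k(p,q)+c\bigr) \pmod{x^2+px+q}.
\]

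Next I would observe that the right-hand side has degree at most $1$, while $x^2+px+q$ has degree $2$, so the divisibility hypothesis forces both the coefficient of $x$ and the constant term of that remainder to vanish. Equating the coefficient of $x$ to $0$ gives the first equation of~(\ref{Eq21}); equating the constant coefficient to $0$ and solving for $c$ gives the second.

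There is essentially no obstacle in this argument: it is a direct linear-algebraic corollary of Lemma~\ref{Lem2.1}. The only point requiring mild care is the hypothesis $k\geq 1$, which ensures that $x^k$ is a monomial of positive degree whose reduction is governed by Lemma~\ref{Lem2.1} and does not conflate with the constant term $c$; this condition is guaranteed by the standing assumption $n>m>k\geq 1$. Were $k=0$ permitted, the second equation would require a correction reflecting $B_0(p,q)=1$.
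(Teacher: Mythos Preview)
Your argument is correct and is exactly the natural proof: reduce each monomial via Lemma~\ref{Lem2.1}, use linearity, and set the linear remainder to zero. The paper itself does not spell out a proof but simply cites Corollary~3.2 of \cite{BremnerUlas2}, whose content is precisely this computation, so your approach matches the intended one. (Your closing caveat about $k=0$ is unnecessary, since Lemma~\ref{Lem2.1} also covers $n=0$ with $A_0=0$, $B_0=1$, and the formula would remain valid; but this is a harmless aside given the standing hypothesis $k\ge 1$.)
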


By the theory of elliptic curves, we have the following theorems.

\begin{theorem}\label{Thm2.3} Let $f_{4,m,k}(a,x)=x^4+ax^m+x^k+1$, where $4>m>k\geq1$ and $a\neq0.$\\

$(1)$ If $(m,k)=(2,1)$ then $f_{4,2,1}(a,x)$ is divisible by
$x^2+px+q$ if and only if
\[a=\frac{q^6-q^4-q^3-q^2+1}{q(q-1)^2(q+1)^2},p=\frac{-q}{q^2-1},\]where
$q\neq0,\pm1.$ In this case we have
\[f_{4,2,1}\bigg(\frac{q^6-q^4-q^3-q^2+1}{q(q-1)^2(q+1)^2},x\bigg)=\bigg(x^2-\frac{q}{q^2-1}x+q\bigg)\bigg(x^2+\frac{q^2}{q^2-1}x+\frac{1}{q}\bigg).\]

$(2)$ If $(m,k)=(3,1)$ then $f_{4,3,1}(a,x)$ is divisible by
$x^2+px+q$ if and only if
\[a=1,p=2,q=1;~or~a=1,p=-1,q=1.\]
In this case we have
\[f_{4,3,1}(1,x)=(x^2-x+1)(x+1)^2.\]

$(3)$ If $(m,k)=(3,2)$ then $f_{4,3,2}(a,x)$ does not have a
quadratic factor.
\end{theorem}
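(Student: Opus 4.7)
The plan is to apply Lemma \ref{Lem2.2} to each of the three subcases with $b = c = 1$. First I would compute from the recursion of Lemma \ref{Lem2.1} that $A_1 = 1$, $A_2 = -p$, $A_3 = p^2 - q$, $A_4 = -p^3 + 2pq$, $B_1 = 0$, $B_2 = -q$, $B_3 = pq$, $B_4 = -p^2q + q^2$. Substituting into \eqref{Eq21} yields, for each $(m,k)$, two rational expressions for $a$ in terms of $p$ and $q$, and equating them gives a single polynomial relation in $p$ and $q$. In case (1) this relation simplifies (after clearing the factor $p$) to $q = p(1-q^2)$, which admits the rational parametrization $p = -q/(q^2-1)$; back-substitution then recovers the displayed formula for $a$, and the claimed factorization is verified by direct polynomial multiplication.

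In cases (2) and (3) elimination produces the cubic relation $p^2 = q^3 + pq + q$ or $p^2 = q^3 - q^2 + q$, each defining an elliptic curve over $\mathbb{Q}$. A linear change of variables (for case (2), the shift $p \mapsto p + q/2$ followed by the scaling $X = 4q$, $Y = 8p - 4q$) places the curves in Weierstrass form $E_1 : Y^2 = X^3 + X^2 + 16X$ and $E_2 : Y^2 = X^3 - X^2 + X$. Each curve carries the evident $2$-torsion point $(0,0)$, and a short tangent-line computation shows that $(4,\pm 12) \in E_1(\mathbb{Q})$ and $(1,\pm 1) \in E_2(\mathbb{Q})$ each double to $(0,0)$ and therefore have order $4$. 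I expect both curves to have rank $0$ with torsion $\mathbb{Z}/4\mathbb{Z}$, so that these four points exhaust $E_i(\mathbb{Q})$.

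Granting this, for case (2) the two nontrivial points pull back to $(p,q) = (2,1)$ and $(-1,1)$, both giving $a = 1$ and the factorization $f_{4,3,1}(1,x) = (x^2 - x + 1)(x+1)^2$. For case (3) the points $(1,\pm 1)$ correspond to $q = 1$, $p = \pm 1$, which satisfies the degenerate condition $p^2 = q$ on which the denominator $p^2 - q$ of the elimination vanishes; a direct re-examination of the original system in that case forces $a = 0$, so no quadratic factor can exist when $a \neq 0$. The main obstacle is the rank verification for $E_1$ and $E_2$: both admit a rational $2$-isogeny through $(0,0)$ onto $Y^2 = X(X-9)(X+7)$ and $Y^2 = X(X-3)(X+1)$ respectively, and a standard $2$-descent on the isogenous curves should certify rank $0$ by showing the relevant Selmer groups are trivial. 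The remaining bookkeeping with the excluded loci $p^2 \neq q$, $pq \neq 0$ and $q \neq 0, \pm 1$ is routine.
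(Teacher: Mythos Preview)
Your proposal follows essentially the same route as the paper: apply Lemma~\ref{Lem2.2}, eliminate $a$ between the two resulting equations, and in cases (2) and (3) recognise the relation as an elliptic curve of rank~$0$ with small torsion. The paper simply asserts the rank and torsion (implicitly by computer), whereas you go a step further and outline a $2$-descent via the $2$-isogeny through $(0,0)$; this is a reasonable addition and does not change the underlying argument. One small computational slip: for $E_2:Y^2=X(X^2-X+1)$ the $2$-isogenous curve is $Y^2=X(X^2+2X-3)=X(X-1)(X+3)$, not $X(X-3)(X+1)$. Also note that in case~(2) the point $(p,q)=(-1,1)$ lies on the excluded locus $p^2=q$ just as $(p,q)=(\pm1,1)$ does in case~(3), so it too needs the direct check you describe there; it yields $a=1$ as claimed.
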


\begin{proof}[\textbf{Proof of Theorem 2.3.}]

(1) Case $(m,k)=(2,1)$. From Eq. (\ref{Eq21}), we have
\[-p^3-ap+2pq+1=0,-p^2q-aq+q^2+1=0.\]
Solve the above two Diophantine equations with respect to $a$, we
get
\[-\frac{p^3-2pq-1}{p}=-\frac{p^2q-q^2-1}{q}.\]
Then \[p=\frac{-q}{q^2-1}.\] Hence,
\[a=\frac{q^6-q^4-q^3-q^2+1}{q(q-1)^2(q+1)^2}.\]

(2) Case $(m,k)=(3,1)$. From Eq. (\ref{Eq21}), we have
\[ap^2-p^3-aq+2pq+1=0,apq-p^2q+q^2+1=0.\]
Solve the above two Diophantine equations with respect to $a$, we
get
\[\frac{p^3-2pq-1}{p^2-q}=\frac{p^2q-q^2-1}{pq}.\]
Then \[p=\frac{q}{2}\pm\frac{\sqrt{4q^3+q^2+4q}}{2}.\] To make $p$
be a rational number, set $r^2=4q^3+q^2+4q$. This is an elliptic
curve equivalent to $Y^2=X^3+X^2+16X$, the rank of which is 0 and
the torsion points are $(X,Y)=(0,0),(4;\pm12).$ (For the torsion
points, we omit the point $\mathcal{O}$ at infinity here and in the
following.) Hence, $(p,q)=(0,0),(-1,2;1)$, which lead to $a=1$.

(3) Case $(m,k)=(3,2)$. From Eq. (\ref{Eq21}), we have
\[ap^2-p^3-aq-p+2pq=0,apq-p^2q-q+q^2+1=0.\]
Solve the above two Diophantine equations with respect to $a$, we
get
\[\frac{(p^2-2q+1)p}{p^2-q}=\frac{p^2q-q^2+q+1}{pq}.\]
Then \[p=\pm\sqrt{q(q^2-q+1)}.\] Let $r^2=q(q^2-q+1)$, then $p$ will
be a rational number. The above curve is an elliptic curve of rank 0
with trivial torsion points $(r,q)=(0,0),(\pm1;1)$. Hence,
$(p,q)=(0,0),(\pm1;1)$, which lead to $a=0$.
\end{proof}

\begin{theorem}\label{Thm2.4} Let $f_{4,m,k}(a,a,1,x)=x^4+ax^m+ax^k+1$, where $4>m>k\geq1$ and $a\neq0.$\\

$(1)$ If $(m,k)=(2,1)$ then the set, say $\mathcal{A}$, of those $a$
such that $f_{4,2,1}(a,a,1,x)$ is divisible by $x^2+px+q$ is
infinite. More precisely the set $\mathcal{A}$ is parameterized by
the rational points on the rank one elliptic curve
\[E_1:~Y^2= X^3+4X^2+16X+64.\]

$(2)$ If $(m,k)=(3,1)$ then $f_{4,3,1}(a,a,1,x)$ is divisible by
$x^2+px+q$ if and only if
\[a=\frac{p^2-2}{p},q=1,\]where
$p\neq0.$ In this case we have
\[f_{4,3,1}\bigg(\frac{p^2-2}{p},\frac{p^2-2}{p},1,x\bigg)=(x^2+px+1)(x^2-\frac{2}{p}x+1).\]

$(3)$ If $(m,k)=(3,2)$ it is the case (1) by the transformation
$f(x)\mapsto x^4f(1/x)$.
\end{theorem}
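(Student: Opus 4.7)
The plan is to apply Lemmas~\ref{Lem2.1} and~\ref{Lem2.2} to each of the three cases. Computing $A_i(p,q)$ and $B_i(p,q)$ for $i\leq 4$ from the recursion yields
\[
A_2=-p,\ A_3=p^2-q,\ A_4=2pq-p^3,\quad B_2=-q,\ B_3=pq,\ B_4=q^2-p^2q.
\]
Substituting these into Lemma~\ref{Lem2.2} with $b=a$ and $c=1$ produces, in each case, two equations linear in $a$; solving both for $a$ and equating eliminates $a$ and leaves a single polynomial relation $F(p,q)=0$ whose rational solutions parameterize the desired quadratic factorizations.

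For part~$(1)$ with $(m,k)=(2,1)$, the elimination is expected to yield the plane cubic
\[
\mathcal{C}:\ p^2q - pq^2 + p - q^2 = 1,
\]
together with the formulae $a=p(p^2-2q)/(1-p) = (1+q^2-p^2q)/q$. The point $(p,q)=(1,0)$ sits on $\mathcal{C}$, and using it as base one applies the standard tangent-and-chord procedure (or Nagell's algorithm) to transform $\mathcal{C}$ birationally into Weierstrass form; the intended normalization should give exactly $E_1\colon Y^2=X^3+4X^2+16X+64$. The crux of the argument is then to establish $\mathrm{rank}_{\mathbb{Q}}(E_1)=1$. This is the only non-algebraic step; I would verify it via a $2$-descent or, more pragmatically, by appeal to a computer algebra system (MAGMA, SAGE, PARI), exhibiting an explicit generator of the free part. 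Its rational multiples pull back through the birational map to produce infinitely many distinct values of $a$, proving that $\mathcal{A}$ is infinite.

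For part~$(2)$ with $(m,k)=(3,1)$, the analogous elimination is expected to factor cleanly; a direct manipulation should reduce $F(p,q)=0$ to
\[
(q-1)(p^2+q^2+1)=0.
\]
Since $p^2+q^2+1>0$ over $\mathbb{R}$, every rational solution satisfies $q=1$, and back-substitution into either of the linear-in-$a$ equations forces $a=(p^2-2)/p$ with $p\neq 0$. The claimed factorization $f_{4,3,1}((p^2-2)/p,(p^2-2)/p,1,x)=(x^2+px+1)(x^2-(2/p)x+1)$ is then confirmed by direct expansion.

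For part~$(3)$ with $(m,k)=(3,2)$, the observation $x^4 f_{4,3,2}(a,a,1,1/x) = f_{4,2,1}(a,a,1,x)$ shows that the involution $f(x)\mapsto x^4 f(1/x)$ interchanges the two cases, transforming divisibility by $x^2+px+q$ (with $q\neq 0$) into divisibility by $qx^2+px+1$. Hence the classification in part~$(3)$ is logically equivalent to that in part~$(1)$, and no independent computation is required. The single serious obstacle across the whole theorem is thus the rank determination for $E_1$, which drives the infinitude of $\mathcal{A}$ in part~$(1)$; every other step is routine polynomial algebra.
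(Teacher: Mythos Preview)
Your proposal is correct and follows essentially the same route as the paper: eliminate $a$ via Lemma~\ref{Lem2.2}, obtain the plane cubic $p^2q-pq^2-q^2+p-1=0$ in case~(1) and the factorization $(q-1)(p^2+q^2+1)=0$ in case~(2), then reduce case~(3) to case~(1) by $f(x)\mapsto x^4f(1/x)$. The only cosmetic difference is that the paper first solves the cubic as a quadratic in $q$ to reach the quartic model $r^2=p^4+4p^2-4$ before passing to $E_1$, whereas you propose going directly from the cubic to Weierstrass form via Nagell's algorithm; both are standard and yield the same curve, and in both cases the rank computation is deferred to a computer algebra system.
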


\begin{proof}[\textbf{Proof of Theorem 2.4.}]
(1) Case $(m,k)=(2,1)$. By Eq. (\ref{Eq21}), we have
\[-p^3-ap+2pq+a=0,-p^2q-aq+q^2+1=0.\]
Solve the above two Diophantine equations with respect to $a$, we
get
\[-\frac{p(p^2-2q)}{p-1}=-\frac{p^2q-q^2-1}{q}.\]
This leads to \[p^2q-pq^2-q^2+p-1=0.\] Solve it for $q$, we have
\[q=\frac{p^2+\sqrt{p^4+4p^2-4}}{2(p+1)}.\]If $q$ is a rational
number, it needs $r^2=p^4+4p^2-4.$ This curve is equivalent to the
elliptic curve $E_1:~Y^2= X^3+4X^2+16X+64,$ the rank of which is 1
and there are infinitely many rational points on it. Hence, there
are infinitely many $(p,q)$, leading to infinitely many $a$.

(2) Case $(m,k)=(3,1)$. By Eq. (\ref{Eq21}), we have
\[ap^2-p^3-aq+2pq+a=0,apq-p^2q+q^2+1=0.\]
Solve the above two Diophantine equations with respect to $a$, we
get
\[\frac{p(p^2-2q)}{p^2-q+1}=\frac{p^2q-q^2-1}{pq}.\]
This leads to  \[(q-1)(p^2+q^2+1)=0.\] Then $q=1$. Hence,
\[a=\frac{p^2-2}{p}.\]
The result follows.
\end{proof}

\begin{theorem}\label{Thm2.5} Let $f_{4,m,k}(a,1,a,x)=x^4+ax^m+x^k+a$, where $4>m>k\geq1$ and $a\neq0.$\\

$(1)$ If $(m,k)=(2,1)$ then the set, say $\mathcal{A}$, of those $a$
such that $f_{4,2,1}(a,1,a,x)$ is divisible by $x^2+px+q$ is
infinite. More precisely the set $\mathcal{A}$ is parameterized by
the rational points on the rank one elliptic curve
\[E_1:~Y^2= X^3+4X^2+16X+64.\]

$(2)$ If $(m,k)=(3,1)$ then $f_{4,3,1}(a,1,a,x)$ is divisible by
$x^2+px+q$ if and only if
\[a=p-1,q=p-1,\]where
$p\neq0,1.$ In this case we have
\[f_{4,3,1}(p-1,1,p-1,x)=(x^2-x+1)(x^2+px+p-1).\]

$(3)$ If $(m,k)=(3,2)$ it is the case (1) by the transformations
$f(x)\mapsto x^4f(1/x)$ and $f(x) \mapsto f(x)/a$.
\end{theorem}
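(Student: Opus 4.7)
The plan is to mirror the strategy of Theorems 2.3 and 2.4: apply Lemma 2.2 with $(a,b,c)=(a,1,a)$ in each case to obtain two equations in $p,q,a$ coming from $A_n+aA_m+A_k=0$ and $a=-B_n-aB_m-B_k$, then eliminate $a$ to get a single curve in $(p,q)$ whose rational points determine $\mathcal{A}$. Using the values $A_2=-p$, $A_3=p^2-q$, $A_4=-p^3+2pq$ and $B_2=-q$, $B_3=pq$, $B_4=-p^2q+q^2$ from Lemma 2.1, all three systems are immediate.

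For part (1), $(m,k)=(2,1)$, the system reads
\[
-p^3+2pq-ap+1=0, \qquad a(1-q)=q(p^2-q),
\]
and eliminating $a$ yields the cubic $p^3+pq^2-2pq+q-1=0$. Viewing this as a quadratic in $q$ reduces the question to finding rational points on the quartic $r^2=-4p^4+4p^2+1$. The involution $p\mapsto 1/p$, $r\mapsto r/p^2$ sends this quartic to $r^2=p^4+4p^2-4$, which Theorem 2.4(1) has already identified with the rank-one elliptic curve $E_1:Y^2=X^3+4X^2+16X+64$; hence $\mathcal{A}$ is infinite and parameterized by $E_1$.

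For part (2), $(m,k)=(3,1)$, the analogous elimination produces
\[
q^3-p^3+3pq+1=(q-p+1)\bigl(q^2+(p-1)q+p^2+p+1\bigr)=0.
\]
The quadratic factor has discriminant $-3(p+1)^2\le 0$ and contributes only the isolated point $(p,q)=(-1,1)$, which corresponds to the universal factorization $x^4+ax^3+x+a=(x+a)(x+1)(x^2-x+1)$ and is discarded as trivial. Every other rational solution satisfies $q=p-1$; substitution in either original equation gives $a=(p-1)(p^2-p+1)/(p^2-p+1)=p-1$, so $a=q=p-1$, and the displayed factorization is a direct check. For part (3), $(m,k)=(3,2)$, the transformations $f(x)\mapsto x^4 f(1/x)$ followed by $f(x)\mapsto f(x)/a$ carry $x^4+ax^3+x^2+a$ to $x^4+(1/a)x^2+x+1/a$, which is the polynomial of part (1) with parameter $1/a$; divisibility by a quadratic is preserved (up to the obvious replacement of the divisor by its reciprocal), so the classification is inherited from part (1).

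The principal obstacle is part (1), specifically identifying the quartic $r^2=-4p^4+4p^2+1$ with the same rank-one elliptic curve $E_1$ that already appeared in Theorem 2.4(1), without redoing the Weierstrass reduction and rank computation. The cleanest route is the involution $p\mapsto 1/p$ above, which maps the two quartics to each other and therefore identifies their Jacobians; any other approach (computing the $I,J$ invariants of the quartic and matching to a Weierstrass model of $E_1$) would be more tedious and less illuminating.
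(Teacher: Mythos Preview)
Your proof follows the paper's essentially line for line: the same elimination of $a$, the same cubic $p^{3}+pq^{2}-2pq+q-1=0$ and quartic $r^{2}=-4p^{4}+4p^{2}+1$ in part~(1), reduced via $p\mapsto 1/p$ to the quartic of Theorem~2.4(1) and hence to $E_{1}$; the same factorization in part~(2); and the same symmetry reduction in part~(3). In part~(2) you are in fact more careful than the paper, which writes ``Then $q=p-1$'' without examining the second factor $p^{2}+pq+q^{2}+p-q+1$; your discriminant computation $-3(p+1)^{2}$ correctly isolates the extra solution $(p,q)=(-1,1)$, corresponding to the universal identity $x^{4}+ax^{3}+x+a=(x+a)(x+1)(x^{2}-x+1)$ that the theorem's ``if and only if'' technically omits.
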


\begin{proof}[\textbf{Proof of Theorem 2.5.}]
(1) Case $(m,k)=(2,1)$. From Eq. (\ref{Eq21}), we have
\[-p^3-ap+2pq+1=0,-p^2q-aq+q^2+a=0.\]
Solve the above two Diophantine equations with respect to $a$, we
get
\[-\frac{p^3-2pq-1}{p}=-\frac{q(p^2-q)}{q-1}.\]
This leads to \[p^3+pq^2-2pq+q-1=0.\] Solve it for $q$, we have
\[q=\frac{2p-1+\sqrt{-4p^4+4p^2+1}}{2p}.\]If $q$ is a rational
number, it needs $r^2=-4p^4+4p^2+1.$ Let
$u=\frac{r}{p^2},v=\frac{1}{p}$, then $u^2=v^4+4v^2-4$. The
remainder proof is similar to the case (1) of Theorem 2.4.

(2) Case $(m,k)=(3,1)$. From Eq. (\ref{Eq21}), we have
\[ap^2-p^3-aq+2pq+1=0,apq-p^2q+q^2+a=0.\]
Solve the above two Diophantine equations with respect to $a$, we
get
\[\frac{p^3-2pq-1}{p^2-q}=\frac{q(p^2-q)}{pq+1}.\]
This leads to  \[(-q-1+p)(p^2+pq+q^2+p-q+1)=0.\] Then $q=p-1$.
Hence,
\[a=p-1.\]
The result follows.
\end{proof}

\begin{theorem}\label{Thm2.6} Let $f_{4,m,k}(1,a,a,x)=x^4+x^m+ax^k+a$, where $4>m>k\geq1$ and $a\neq0.$\\

$(1)$ If $(m,k)=(2,1)$ then $f_{4,2,1}(1,a,a,x)$ does not have a
quadratic factor.

$(2)$ If $(m,k)=(3,1)$ then $f_{4,3,1}(1,a,a,x)$ is divisible by
$x^2+px+q$ if and only if
\[a=-p^3,q=p^2;~or~a=(p-1)^3,q=p-1,\]where
$p\neq0,1.$ In this case we have
\[f_{4,3,1}(1,-p^3,-p^3,x)=(x^2+px+p^2)(x^2-(p-1)x-p);\]or
\[f_{4,3,1}(1,(p-1)^3,(p-1)^3,x)=(x^2+px+p-1)(x^2-(p-1)x+p^2-2p+1).\]

$(3)$ If $(m,k)=(3,2)$ it is the case (1) by the transformations
$f(x)\mapsto x^4f(1/x)$ and $f(x) \mapsto f(x)/a$.
\end{theorem}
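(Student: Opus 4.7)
The plan is to apply Lemma~\ref{Lem2.2} in each of the three subcases; the qualitative difference lies in how cleanly the resulting system in $p,q,a$ can be analyzed.

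For part~(2), with $(m,k)=(3,1)$, Lemma~\ref{Lem2.2} produces the two expressions $a = p^3 - p^2 - 2pq + q$ and $a = p^2q - pq - q^2$. Equating and collecting terms, I expect the resulting polynomial identity in $p$ and $q$ to factor cleanly as $(p^2-q)(p-q-1)=0$. The branch $q=p^2$ forces $a=-p^3$, and the branch $q=p-1$ forces $a=(p-1)^3$. A direct polynomial division in each case then confirms the two factorizations displayed in the theorem; in particular, using $x^3\pm c^3=(x\pm c)(x^2\mp cx+c^2)$ with $c=p$ and $c=p-1$ respectively gives the stated quartic decompositions essentially for free.

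For part~(1), with $(m,k)=(2,1)$, the analogous procedure yields $a = p(p^2-2q+1) = q(p^2-q+1)$, and eliminating $a$ gives a polynomial in $p,q$ that is quadratic in $q$ with discriminant $p^4+6p^2+1$. Rationality of $q$ thus reduces to finding rational points on the genus-one quartic $C : r^2 = p^4+6p^2+1$, which carries the obvious rational point $(0,1)$. The standard birational transformation (set $s=r-p^2$ to solve for $p^2$ as a rational function of $s$, then clear denominators) converts $C$ to the Weierstrass model $E : Y^2 = X^3 - 16X$, the congruent-number elliptic curve associated to $n=4$. Since $4$ is not a congruent number, $E$ has rank zero, with Mordell--Weil group $\{\mathcal{O},(0,0),(\pm 4,0)\}$. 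Tracking each torsion point back through the birational map, every rational point on $C$ is found to force $p=0$ (or to sit at a pole of the map); either outcome yields $a = p(p^2-2q+1) = 0$, contradicting $a\neq 0$. Thus $f_{4,2,1}(1,a,a,x)$ has no rational quadratic factor. The chief technical step is the rank assertion for $E$, which can be supplied either by citing the classical characterization of congruent numbers or by an explicit $2$-descent.

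For part~(3), the substitution $f(x)\mapsto x^4 f(1/x)$ carries $f_{4,3,2}(1,a,a,x)=x^4+x^3+ax^2+a$ to $ax^4+ax^2+x+1$, and subsequent division by $a$ yields $f_{4,2,1}(1,1/a,1/a,x)$. By part~(1), this polynomial admits no quadratic factor over $\mathbb{Q}$ for any nonzero $1/a$, and since the two transformations preserve the existence of a rational quadratic factor, $f_{4,3,2}(1,a,a,x)$ inherits this property.
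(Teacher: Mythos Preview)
Your proposal is correct and follows essentially the same route as the paper: in each case you apply Lemma~\ref{Lem2.2}, eliminate $a$, and analyze the resulting relation in $p,q$. The only visible difference is in part~(1): the paper writes the elliptic curve as $Y^2=X^3+6X^2-4X-24$, whereas you use $Y^2=X^3-16X$; these are the same curve via the shift $X\mapsto X-2$, so your identification with the congruent-number curve for $n=4$ (and the appeal to Fermat's result that $1$, hence $4$, is not a congruent number) is a pleasant alternative to a bare rank computation, but not a different method.
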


\begin{proof}[\textbf{Proof of Theorem 2.6.}]
(1) Case $(m,k)=(2,1)$. By Eq. (\ref{Eq21}), we have
\[-p^3+2pq+a-p=0,-p^2q+q^2+a-q=0.\]
Solve the above two Diophantine equations with respect to $a$, we
get
\[p^3-2pq+p=p^2q-q^2+q.\]
This leads to \[p^3-p^2q-2pq+q^2+p-q=0.\] Solve it for $q$, we have
\[q=\frac{p^2+2p+1+\sqrt{p^4+6p^2+1}}{2}.\]If $q$ is a rational
number, it needs $r^2=p^4+6p^2+1.$ This curve is equivalent to the
elliptic curve $Y^2= X^3+6X^2-4X-24,$ the rank of which is 0 and the
torsion points are $(X,Y)=(-6,0),(\pm2,0)$. Hence, $p=0,q=0,1$, and
$a=0$.

(2) Case $(m,k)=(3,1)$. By Eq. (\ref{Eq21}), we have
\[-p^3+p^2+2pq+a-q=0,-p^2q+pq+q^2+a=0.\]
Solve the above two Diophantine equations with respect to $a$, we
get
\[p^3-p^2-2pq+q=p^2q-pq-q^2.\]
This leads to  \[(-q-1+p)(p^2-q)=0.\] Then $q=p^2$ or $p-1$. Hence,
\[a=-p^3~or~(p-1)^3.\]
The result follows.
\end{proof}

\section{Reducible quadrinomials with degree 5}

Now we investigate the reducibility of the quadrinomial
$f_{5,m,k}(a,b,c,x)=x^5+ax^m+bx^k+c$ for three special cases
$(a,b,c)=(a,1,1),(a,a,1),(a,1,a),(1,a,a).$ In the following
theorems, the cases without asterisk are absolute, and the cases
with asterisk represent the results are conjectural.

\begin{theorem}\label{Thm3.1} Let $f_{5,m,k}(a,x)=x^5+ax^m+x^k+1$, where $5>m>k\geq1$ and $a\neq0.$\\

$(1)$ If $(m,k)=(2,1)$ then $f_{5,2,1}(a,x)$ is divisible by
$x^2+px+q$ if and only if
\[a=-3,p=-2,q=1.\]
In this case we have
\[f_{5,2,1}(-3,x)=(x-1)^2(x^3+2x^2+3x+1).\]

$(2)$ If $(m,k)=(3,1)$ then $f_{5,3,1}(a,x)$ is divisible by
$x^2+px+q$ if and only if
\[a=2,p=-1,q=1.\]
In this case we have
\[f_{5,2,1}(2,x)=(x^2-x+1)(x^3+x^2+2x+1).\]

$(3)$ If $(m,k)=(3,2)$ then $f_{5,3,2}(a,x)$ is divisible by
$x^2+px+q$ if and only if
\[\begin{split}
&a=1,p=-1,q=1;~or~a=1,p=0,q=1;\\
or~&a=-\frac{19397}{1458},p=\frac{10}{27},q=\frac{1}{6};~or~a=\frac{2597}{192},p=-\frac{3}{8},q=\frac{1}{6}.
\end{split}\] In this case we have
\[\begin{split}
&f_{5,3,2}(1,x)=(x+1)(x^2+1)(x^2-x+1);\\
&or~f_{5,3,2}\bigg(-\frac{19397}{1458},x\bigg)=\bigg(x^2+\frac{10}{27}x+\frac{1}{6}\bigg)\bigg(x^3-\frac{10}{27}x^2-\frac{40}{3}x+6\bigg);\\
&or~f_{5,3,2}\bigg(\frac{2597}{192},x\bigg)=\bigg(x^2-\frac{3}{8}x+\frac{1}{6}\bigg)\bigg(x^3+\frac{3}{8}x^2+\frac{27}{2}x+6\bigg).
\end{split}\]

*$(4)$ If $(m,k)=(4,1)$ then $f_{5,4,1}(a,x)$ is
divisible by $x^2+px+q$ if and only if
\[a=-2,p=-1,q=1;~or~a=-\frac{1055}{16},p=\frac{1}{16},q=\frac{1}{8}.\]
In this case we have
\[\begin{split}
&f_{5,4,1}(-2,x)=(x^2-x-1)(x^3-x^2-1);\\
&or~f_{5,4,1}\bigg(-\frac{1055}{16},x\bigg)=\bigg(x^2+\frac{1}{16}x+\frac{1}{8}\bigg)(x^3-66x^2+4x+8).\end{split}\]

*$(5)$ If $(m,k)=(4,2)$ then $f_{5,4,2}(a,x)$ does not have a
quadratic factor.

$(6)$ If $(m,k)=(4,3)$ then $f_{5,4,3}(a,x)$ is divisible by
$x^2+px+q$ if and only if
\[a=-1,p=0,q=1.\]
In this case we have
\[f_{5,4,1}(-1,x)=(x^2+1)(x^3-x^2+1).\]
\end{theorem}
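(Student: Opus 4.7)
The plan is to follow the template of Theorems~\ref{Thm2.3}--\ref{Thm2.6}: apply Lemma~\ref{Lem2.2} to $f_{5,m,k}(a,x)$ and, for each of the six pairs $(m,k)$, eliminate $a$ between the two equations of \eqref{Eq21} to obtain a single polynomial equation $C_{m,k}(p,q)=0$ whose rational points classify all admissible $(p,q)$. Using the recursions of Lemma~\ref{Lem2.1} I would precompute
\[A_5=p^4-3p^2q+q^2,\qquad B_5=p^3q-2pq^2,\]
together with $A_k,A_m,B_k,B_m$ for the required indices; back-substitution into either equation of \eqref{Eq21} then recovers the value of $a$.

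For case~(1) the elimination yields the cubic $q^3-p^2q^2+q-p=0$. The plan is to put it in Weierstrass form via a standard birational transformation, compute its rank (expected to be $0$) using \texttt{mwrank} or Magma/Sage, enumerate the torsion, and check that the only rational point producing a nondegenerate triple is $(p,q,a)=(-2,1,-3)$. Cases~(2) and~(6) proceed analogously, each elimination producing an elliptic curve of rank $0$ whose only admissible torsion point delivers the stated triple. Case~(3) reduces to $p^2+pq^3+q^2-q=0$; treating this as a quadratic in $p$, rationality of $p$ forces $q^6-4q^2+4q$ to be a rational square, which defines a genus-$2$ hyperelliptic curve. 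Here I would invoke the Chabauty--Coleman method already advertised in the paper's keywords: first bound the Mordell--Weil rank of the Jacobian strictly below $2$, then run a $p$-adic analysis at a prime of good reduction to certify that the four $a$-values listed exhaust the solution set.

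Cases~(4) and~(5) produce curves of genus $\geq 2$ whose Jacobians may fail the Chabauty rank inequality, which is why the theorem records them as conjectural. The plan there is to carry out an exhaustive rational-point search up to large height (using, e.g., built-in point-search routines in Magma) and, whenever the rank permits, to supplement with Mordell--Weil sieving so as to upgrade the computational output to a theorem; otherwise the result remains conditional on the search.

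The principal obstacle is the genus-$2$ analysis -- first in case~(3), and more seriously in cases~(4) and~(5). Concretely, one must certify a rank bound on the relevant Jacobian, locate a prime at which the vanishing locus of annihilating $p$-adic differentials matches exactly the known rational points, and rule out spurious contributions from points at infinity or from the singular locus of $C_{m,k}$. When the rank equals or exceeds the genus -- suspected in the asterisked cases -- Chabauty alone is insufficient, and this is the genuine bottleneck of the argument.
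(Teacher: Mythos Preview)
Your overall framework matches the paper's, but there is a genuine error in your genus computations for cases~(1), (2), and~(6). You describe the curve $q^3-p^2q^2+q-p=0$ in case~(1) as a ``cubic'' to be put in Weierstrass form, and you assert that cases~(2) and~(6) likewise reduce to rank-$0$ elliptic curves. In fact all three are of genus~$2$. For instance, in case~(1) the elimination is quadratic in $p$, and the discriminant condition is $r^2=4q^5+4q^3+1$, a quintic hyperelliptic model; in case~(2) one obtains $r^2=q^6-2q^4+q^2+4q$; and in case~(6), after the substitution $p=tq$, one obtains $r^2=t^6-2t^3+8t+1$. Each has Jacobian of Mordell--Weil rank~$1$, and the paper settles all three by exactly the same Chabauty computation you correctly outline for case~(3). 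So your proposed torsion enumeration on a rank-$0$ elliptic curve would simply not go through; the Chabauty machinery is needed in four of the six cases, not just one.

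A smaller point: your diagnosis of the asterisked cases is only half right. Case~(4) does have Jacobian rank~$2$, so Chabauty fails for the reason you give. But in case~(5) the paper reports rank~$1$; the obstruction there is not the rank inequality but the inability to produce an explicit generator of the Mordell--Weil group, without which the Chabauty routine cannot be initialised. Your plan to supplement with Mordell--Weil sieving would not bypass this, since sieving also requires explicit generators.
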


\begin{proof}[\textbf{Proof of Theorem 3.1.}]
(1) Case $(m,k)=(2,1)$. From Eq. (\ref{Eq21}), we have
\[p^4-3p^2q-ap+q^2+1=0,p^3q-2pq^2-aq+1=0.\]
Solve the above two Diophantine equations with respect to $a$, we
get
\[\frac{p^4-3p^2q+q^2+1}{p}=\frac{p^3q-2pq^2+1}{q}.\]This leads to
\[-\frac{p^2q^2-q^3+p-q}{pq}=0.\]Solve it for $p$, we have
\[p=\frac{-1\pm\sqrt{4q^5+4q^3+1}}{2q^2}.\] If $p$ is a rational number,
it needs $r^2=4q^5+4q^3+1.$ This is a hyperelliptic quintic curve of
genus 2. The rank of the Jacobian variety is 1, and Magma's Chabauty
routines \cite{Chabauty-Magma} determine the only finite rational
points are $(r,q)=(\pm1;0),(\pm3;1)$, which lead to
$(p,q)=(-2,1),(1,1),(0,0)$. Hence, $a=-3$.

(2) Case $(m,k)=(3,1)$. From Eq. (\ref{Eq21}), we have
\[p^4+ap^2-3p^2q-aq+q^2+1=0,p^3q+apq-2pq^2+1=0.\]
Solve the above two Diophantine equations with respect to $a$, we
get
\[-\frac{p^4-3p^2q+q^2+1}{p^2-q}=-\frac{p^3q-2pq^2+1}{pq}.\]
Then
\[\frac{pq^3+p^2-pq-q}{(p^2-q)pq}=0.\]Solve it for $p$, we have
\[p=\frac{-q^3+q\pm\sqrt{q^6-2q^4+q^2+4q}}{2}.\]
Let $r^2=q^6-2q^4+q^2+4q,$ then $p$ is a rational number. This is a
hyperelliptic sextic curve of genus 2. The rank of the Jacobian
variety is 1, and Magma's Chabauty routines determine the only
finite rational points are $(r,q)=(0,0),(\pm2;1)$, which lead to
$(p,q)=(\pm1;1),(0,0)$. Hence, $a=2$.

(3) Case $(m,k)=(3,2)$. From Eq. (\ref{Eq21}), we have
\[p^4+ap^2-3p^2q-aq-p+q^2=0,p^3q+apq-2pq^2-q+1=0.\]
Solve the above two Diophantine equations with respect to $a$, we
get
\[-\frac{p^4-3p^2q+q^2-p}{p^2-q}=-\frac{p^3q-2pq^2-q+1}{pq}.\]
This leads to
\[\frac{pq^3+p^2+q^2-q}{(p^2-q)pq}=0.\]Solve it for $p$, we have
\[p=\frac{-q^3\pm\sqrt{q^6-4q^2+4q}}{2}.\] If $p$ is a rational number,
it needs $r^2=q^6-4q^2+4q.$ This is a hyperelliptic sextic curve of
genus 2. The rank of the Jacobian variety is 1, and Magma's Chabauty
routines determine the only finite rational points are
\[(r,q)=(0,0),(\pm1;1),\bigg(\pm\frac{161}{6};\frac{1}{6}\bigg),\]
which lead to
\[(p,q)=(0,1),(-1,1),\bigg(\frac{10}{27},\frac{1}{6}\bigg),\bigg(-\frac{3}{8},\frac{1}{6}\bigg).\]
Hence, \[a=1,-\frac{19397}{1458},\frac{2597}{192}.\]

*(4) Case $(m,k)=(4,1)$. From Eq. (\ref{Eq21}), we have
\[-ap^3+p^4+2apq-3p^2q+q^2+1=0,-ap^2q+p^3q+aq^2-2pq^2+1=0.\]
Solve the above two Diophantine equations with respect to $a$, we
get
\[\frac{p^4-3p^2q+q^2+1}{p(p^2-2q)}=\frac{p^3q-2pq^2+1}{q(p^2-q)}.\]
Then
\[-\frac{q^4+p^3-p^2q-2pq+q^2}{p(p^2-2q)q(p^2-q)}=0.\]It
needs to consider $q^4+p^3-p^2q-2pq+q^2=0$. Let $p=tq$, we get
\[q^2(qt^3-qt^2+q^2-2t+1)=0.\]Solve it for $q$, we get
\[q=\frac{-t^3+t^2\pm\sqrt{t^6-2t^5+t^4+8t-4}}{2}.\] Let $r^2=t^6-2t^5+t^4+8t-4$, then $q$ is a rational number.
This is a hyperelliptic sextic curve of genus 2. The rank of the
Jacobian variety is 2, so standard Chabauty arguments do not apply
and we are unable to determine explicitly all the rational points.
But the obviously rational points are
\[(r,q)=(\pm2;1),\bigg(\pm\frac{1}{8};\frac{1}{2}\bigg),\] which lead to
\[(p,q)=(0,0),(-1,-1),(1,1),\bigg(\frac{1}{16},\frac{1}{8}\bigg).\]
Hence, \[a=-2,-\frac{1055}{16}.\]

*(5) Case $(m,k)=(4,2)$. From Eq. (\ref{Eq21}), we have
\[-ap^3+p^4+2apq-3p^2q-p+q^2=0,-ap^2q+p^3q+aq^2-2pq^2-q+1=0.\]
Solve the above two Diophantine equations with respect to $a$, we
get
\[\frac{p^4-3p^2q+q^2-p}{p(p^2-2q)}=\frac{p^3q-2pq^2-q+1}{q(p^2-q)}.\]
This leads to
\[-\frac{q^4+p^3+pq^2-2pq}{p(p^2-2q)q(p^2-q)}=0.\]It
needs to consider $q^4+p^3+pq^2-2pq=0$. Let $p=tq$, we get
\[q^2(qt^3+q^2+qt-2t)=0.\]Solve it for $q$, we get
\[q=\frac{-t^3-t\pm\sqrt{t^6+2t^4+t^2+8t}}{2}.\] If $q$ is a rational number,
it needs $r^2=t^6+2t^4+t^2+8t.$ This is a hyperelliptic sextic curve
of genus 2. The rank of the Jacobian variety is 1, but we cannot
find the generator of its Jacobian, and Magma's Chabauty routines
don't determine the complete rational point. But it is believed that
the only rational point is $(r,t)=(0,0)$, which leads to
$(p,q)=(0,0)$. Hence, the result follows.

(6) Case $(m,k)=(4,3)$. From Eq. (\ref{Eq21}), we have
\[-ap^3+p^4+2apq+p^2-3p^2q-q+q^2=0,-ap^2q+p^3q+aq^2+pq-2pq^2+1=0.\]
Solve the above two Diophantine equations with respect to $a$, we
get
\[\frac{p^4-3p^2q+p^2+q^2-q}{p(p^2-2q)}=\frac{p^3q-2pq^2+pq+1}{q(p^2-q)}.\]
Then
\[-\frac{q^4+p^3-q^3-2pq}{p(p^2-2q)q(p^2-q)}=0.\]It
needs to consider $q^4+p^3-q^3-2pq=0$. Let $p=tq$, we get
\[q^2(qt^3+q^2-q-2t)=0.\]Solve it for $q$, we get
\[q=\frac{-t^3+1\pm\sqrt{t^6-2t^3+8t+1}}{2}.\] Let $r^2=t^6-2t^3+8t+1,$ then $q$ is a rational number. This is
a hyperelliptic sextic curve of genus 2. The rank of the Jacobian
variety is 1, and Magma's Chabauty routines determine the only
finite rational points are $(r,t)=(\pm1;1)$, which lead to
$(p,q)=(0;0,1)$. Hence, $a=-1$.
\end{proof}

\begin{theorem}\label{Thm3.2} Let $f_{5,m,k}(a,a,1,x)=x^5+ax^m+ax^k+1$, where $5>m>k\geq1$ and $a\neq0.$\\

$(1)$ If $(m,k)=(2,1)$ then $f_{5,2,1}(a,a,1,x)$ is divisible by
$x^2+px+q$ if and only if
\[a=\frac{q^4+q^3+q^2+q+1}{q},p=q+1,\]where $q\neq0$.
In this case we have
\[\begin{split}f_{5,2,1}&\bigg(\frac{q^4+q^3+q^2+q+1}{q},\frac{q^4+q^3+q^2+q+1}{q},1,x\bigg)\\
&=(x^2+(q+1)x+q)\big(x^3+(-q-1)x^2+(q^2+q+1)x+\frac{1}{q}\big).\end{split}\]

*$(2)$ If $(m,k)=(3,1)$ then $f_{5,3,1}(a,a,1,x)$ does not have a
quadratic factor.

$(3)$ If $(m,k)=(3,2)$ then $f_{5,3,2}(a,a,1,x)$ is divisible by
$x^2+px+q$ if and only if
\[a=-\frac{q^4+q^3+q^2+q+1}{q^2},p=q+1;\]
or\[a=\frac{q^4+q^3+q^2+q+1}{q(q^2+2q+1)},p=-\frac{q}{q+1},\]where
$q\neq0,-1.$ In this case we have
\[\begin{split}f_{5,3,2}&\bigg(-\frac{q^4+q^3+q^2+q+1}{q^2},-\frac{q^4+q^3+q^2+q+1}{q^2},1,x\bigg)\\
&=(x^2+(q+1)x+q)\bigg(x^3-(q+1)x^2-\frac{(1+q)x}{q^2}+\frac{1}{q}\bigg);\end{split}\]
or
\[\begin{split}f_{5,3,2}&\bigg(\frac{q^4+q^3+q^2+q+1}{q(q^2+2q+1)},\frac{q^4+q^3+q^2+q+1}{q(q^2+2q+1)},1,x\bigg)\\
&=\bigg(x^2-\frac{qx}{q+1}+q\bigg)\bigg(x^3+\frac{qx^2}{q+1}+\frac{x}{q(q+1)}+\frac{1}{q}\bigg).\end{split}\]

$(4)$ If $(m,k)=(4,1)$ then $f_{5,4,1}(a,a,1,x)$ is divisible by
$x^2+px+q$ if and only if
\[a=\frac{q^4+q^3+q^2+q+1}{q(q^2+q+1)},p=q+1.\]
In this case we have
\[\begin{split}
f_{5,4,1}&\bigg(\frac{q^4+q^3+q^2+q+1}{q(q^2+q+1)},\frac{q^4+q^3+q^2+q+1}{q(q^2+q+1)},1,x\bigg)\\
&=(x^2+(q+1)x+q)\bigg(x^3+\frac{-q^3-q^2+1}{q(q^2+q+1)}x^2+\frac{(q^3-q-1)}{q(q^2+q+1)}x+\frac{1}{q}\bigg).\end{split}\]

$(5)$ If $(m,k)=(4,2)$ it is the case (2) by the transformations
$f(x)\mapsto x^5f(1/x)$ and $f(x) \mapsto f(x)/a$.

$(6)$ If $(m,k)=(4,3)$ it is the case (1) by the transformations
$f(x)\mapsto x^5f(1/x)$ and $f(x) \mapsto f(x)/a$.
\end{theorem}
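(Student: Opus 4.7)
The plan is to apply Lemma~\ref{Lem2.2} with $n=5$, $b=a$, $c=1$. First I would use Lemma~\ref{Lem2.1} to compute closed forms
\[
A_5=p^4-3p^2q+q^2,\qquad B_5=p^3q-2pq^2,
\]
together with the analogous $A_2,A_3,A_4$ and $B_2,B_3,B_4$. The divisibility of $f_{5,m,k}(a,a,1,x)$ by $x^2+px+q$ then reduces to the system
\[
A_5(p,q)+a\bigl(A_m(p,q)+A_k(p,q)\bigr)=0,\qquad 1+B_5(p,q)+a\bigl(B_m(p,q)+B_k(p,q)\bigr)=0,
\]
each equation linear in $a$. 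Solving for $a$ in each and equating produces a single polynomial relation $F_{m,k}(p,q)=0$ which governs the existence of a quadratic divisor.

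For cases (1), (3), and (4), I would factor $F_{m,k}(p,q)$ over $\mathbb{Q}[p,q]$ and substitute each linear factor back into one of the $a$-expressions to get an explicit one-parameter family. In case (1) I expect $F_{2,1}=(p-q-1)\cdot Q(p,q)$ with $Q$ quadratic in $p$: the linear factor yields $p=q+1$ and $a=(q^4+q^3+q^2+q+1)/q$, and the discriminant of $Q$ in $p$, namely $q(4q^2-3q+4)$, defines an elliptic curve whose Mordell--Weil rank and torsion I must check to verify it contributes no further admissible solutions (with $a\neq 0$). Case (3) should factor similarly into linear pieces, producing the two parametric families $p=q+1$ and $p=-q/(q+1)$. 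Case (4) will produce the single family $p=q+1$ after identifying and discarding factors that force $a=0$ or $p^2+q^2=0$.

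For case (2)$^*$, the polynomial $F_{3,1}(p,q)$ appears irreducible and defines a curve of geometric genus at least $2$ with no visible nontrivial rational points. Establishing the claim unconditionally requires proving that this curve has no affine rational point giving a valid $(p,q,a)$. The standard Chabauty/Mordell--Weil machinery apparently does not conclude here (the Jacobian rank seems to meet or exceed the Chabauty bound), and this is the principal obstacle, which is why the result is stated conjecturally.

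Cases (5) and (6) follow by symmetry and require no new curve analysis. The involution $f(x)\mapsto x^5 f(1/x)$ sends $f_{5,m,k}(a,a,1,x)$ to $f_{5,5-k,5-m}(a,a,1,x)$, and divisibility by $x^2+px+q$ corresponds, after rescaling the quadratic factor by $1/q$, to divisibility of the reversed polynomial by $x^2+(p/q)x+(1/q)$, with the value of $a$ unchanged. Hence case (6), $(m,k)=(4,3)$, reduces to case (1), $(m,k)=(2,1)$, and case (5), $(m,k)=(4,2)$, reduces to case (2), $(m,k)=(3,1)$.
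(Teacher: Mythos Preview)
Your proposal follows essentially the same strategy as the paper: apply Lemma~\ref{Lem2.2}, eliminate $a$ from the two linear equations, factor the resulting relation $F_{m,k}(p,q)=0$, and treat each factor either as a parametric family or via the arithmetic of the curve it defines. Your description of case~(1) (including the discriminant $q(4q^2-3q+4)$ and the need to check rank and torsion of the associated elliptic curve) and of cases~(2), (3), (5), (6) matches the paper's argument closely.

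One small correction for case~(4): the extra factors that must be discarded are not of the form ``$a=0$'' or ``$p^2+q^2=0$''. The paper's elimination yields
\[
(q-1)(p-q-1)\bigl(p^2+pq+q^2+p+1\bigr)=0,
\]
and the irreducible quadratic $p^2+pq+q^2+p+1$ is discarded because its discriminant in $p$ equals $-3q^2+2q-3<0$ for all rational $q$, so it has no rational solutions; the factor $q-1$ is present as well (the paper does not discuss it separately). So the shape of the discarded factor is different from what you anticipated, though the mechanism---checking that a quadratic-in-$p$ factor contributes nothing rational---is the same in spirit. With that adjustment, your plan coincides with the paper's proof.
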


\begin{proof}[\textbf{Proof of Theorem 3.2.}]
(1) Case $(m,k)=(2,1)$. By Eq. (\ref{Eq21}), we have
\[p^4-3p^2q-ap+q^2+a=0,p^3q-2pq^2-aq+1=0.\]
Solve the above two Diophantine equations with respect to $a$, we
get
\[\frac{p^4-3p^2q+q^2}{p-1}=\frac{p^3q-2pq^2+1}{q}.\]This leads to
\[(-q-1+p)(p^2q+pq-q^2+q-1)=0.\]Solve it for $p$, we have
\[p=q+1;~or~\frac{-q\pm\sqrt{4q^3-3q^2+4q}}{2q}.\] If $p$ is a rational number,
it needs $r^2=4q^3-3q^2+4q.$ Let $u=4r,v=4q$, we have
$u^2=v^3-3v^2+16v$. This is an elliptic curve with rank 0 and
torsion point $(v,u)=(0,0).$ Then $q=0.$ So we get $p=q+1$ and
\[a=\frac{q^4+q^3+q^2+q+1}{q}.\]

*(2) Case $(m,k)=(3,1)$. By Eq. (\ref{Eq21}), we have
\[p^4+ap^2-3p^2q-aq+q^2+a=0,p^3q+apq-2pq^2+1=0.\]
Solve the above two Diophantine equations with respect to $a$, we
get
\[-\frac{p^4-3p^2q+q^2}{p^2-q+1}=-\frac{p^3q-2pq^2+1}{pq}.\]
Then
\[p^3q+pq^3-2pq^2+p^2-q+1=0.\]This is an equation about $p,q$ with
degree 4 and genus 3, so it's hard to get the all rational solutions
of it. But we conjecture its rational solution is $(p,q)=(0,1).$

(3) Case $(m,k)=(3,2)$. By Eq. (\ref{Eq21}), we have
\[p^4+ap^2-3p^2q-ap-aq+q^2=0,p^3q+apq-2pq^2-aq+1=0.\]
Solve the above two Diophantine equations with respect to $a$, we
get
\[-\frac{p^4-3p^2q+q^2}{p^2-p-q}=-\frac{p^3q-2pq^2+1}{q(p-1)}.\]
This leads to
\[(q-1)(pq+p+q)(-q+p-1)=0.\]Solve it for $p$, we get
\[p=q+1;~or~-\frac{q}{q+1}.\]Then
\[a=-\frac{q^4+q^3+q^2+q+1}{q^2};~or~\frac{q^4+q^3+q^2+q+1}{q(q^2+2q+1)}.\]

(4) Case $(m,k)=(4,1)$. By Eq. (\ref{Eq21}), we have
\[-ap^3+p^4+2apq-3p^2q+q^2+a=0,-ap^2q+p^3q+aq^2-2pq^2+1=0.\]
Solve the above two Diophantine equations with respect to $a$, we
get
\[\frac{p^4-3p^2q+q^2}{p^3-2pq-1}=\frac{p^3q-2pq^2+1}{q(p^2-q)}.\]
Then
\[(q-1)(-q-1+p)(p^2+pq+q^2+p+1)=0.\] Solve it for $p$, we get
\[p=q+1;~or~\frac{q+1\pm\sqrt{-3q^2+2q-3}}{2}.\] Noting that $-3q^2+2q-3<0$ for any rational number $q$, then $p=q+1$.
So
\[a=\frac{q^4+q^3+q^2+q+1}{q(q^2+q+1)}.\]
The result follows.
\end{proof}

\begin{theorem}\label{Thm3.3} Let $f_{5,m,k}(a,1,a,x)=x^5+ax^m+x^k+a$, where $5>m>k\geq1$ and $a\neq0.$\\

$(1)$ If $(m,k)=(2,1)$ then $f_{5,2,1}(a,1,a,x)$ does not have a
quadratic factor.

*$(2)$ If $(m,k)=(3,1)$ then $f_{5,3,1}(a,1,a,x)$ is divisible by
$x^2+px+q$ if and only if
\[a=\frac{1}{2},p=q=1.\] In this case we have
\[f_{5,3,2}\big(\frac{1}{2},1,\frac{1}{2},x\big)=(x^2+x+1)\big(x^3-x^2+\frac{x}{2}+\frac{1}{2}\big).\]

$(3)$ If $(m,k)=(3,2)$ then $f_{5,3,2}(a,1,a,x)$ is divisible by
$x^2+px+q$ if and only if
\[a=-q^2,p=q+1,\] where $q\neq0$. In this case we have
\[f_{5,3,2}(-q^2,1,-q^2,x)=(x^2+(q+1)x+q)(x^3-(q+1)x^2+(q+1)x-q).\]

$(4)$ If $(m,k)=(4,1)$ then $f_{5,4,1}(a,1,a,x)$ does not have a
quadratic factor.

$(5)$ If $(m,k)=(4,2)$ it is the case (2) by the transformations
$f(x)\mapsto x^5f(1/x)$ and $f(x) \mapsto f(x)/a$.

$(6)$ If $(m,k)=(4,3)$ it is the case (1) by the transformations
$f(x)\mapsto x^5f(1/x)$ and $f(x) \mapsto f(x)/a$.
\end{theorem}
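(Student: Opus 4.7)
The plan is to handle each case $(m,k)$ by exactly the same template used in Theorems~\ref{Thm3.1} and~\ref{Thm3.2}. I would apply Lemma~\ref{Lem2.2} to $f_{5,m,k}(a,1,a,x)$, which gives the pair
\[A_5(p,q)+aA_m(p,q)+A_k(p,q)=0,\qquad a+B_5(p,q)+aB_m(p,q)+B_k(p,q)=0,\]
solve each for $a$, and equate the two expressions to obtain a single eliminant $F_{m,k}(p,q)=0$. Every rational quadratic factor $x^2+px+q$ of $f_{5,m,k}(a,1,a,x)$ corresponds to a rational point on this curve, with $a$ read off from either defining equation. Cases~(5) and~(6) then follow immediately from the invariance of reducibility under $f(x)\mapsto x^5 f(1/x)/a$: for $(m,k)=(4,2)$ this exchanges the problem with the $(3,1)$ case under $a\leftrightarrow 1/a$, and for $(m,k)=(4,3)$ with the $(2,1)$ case.

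For case~(3), $(m,k)=(3,2)$, I expect $F_{3,2}(p,q)$ to split with the linear factor $p-q-1$; substituting $p=q+1$ into either $a$-formula should then yield $a=-q^2$, producing the claimed one-parameter family, while the complementary factor should have no rational zeros giving a genuine quadratic factor. For case~(1), $(m,k)=(2,1)$, the analogous elimination should give a curve birationally equivalent to an elliptic curve of rank $0$ whose torsion points all force $a=0$, thereby ruling out quadratic factors. For case~(4), $(m,k)=(4,1)$, the eliminant will be of higher degree; after a homogenising substitution such as $p=tq$, the resulting condition will take the shape $r^2=g(t,q)$, and the aim is to realise this as either an elliptic curve of rank $0$ or as a hyperelliptic curve whose Jacobian has rank $\leq 1$, so that Magma's Chabauty routines can eliminate all rational points outside the locus $a=0$.

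The main obstacle is the conjectural case~(2), $(m,k)=(3,1)$: the elimination will produce a polynomial of degree at least $4$ in both $p$ and $q$ and is expected to cut out a curve of geometric genus $\geq 3$, to which neither classical Chabauty nor the rank-comparison tricks used in the earlier parts apply. The treatment there will therefore consist of performing the elimination explicitly, exhibiting the known rational point $(p,q)=(1,1)$ yielding $a=1/2$, and recording the conjecture that no further solutions exist, exactly as in the starred cases of Theorems~\ref{Thm3.1} and~\ref{Thm3.2}. A subsidiary bookkeeping point throughout is to verify that the poles of the $a$-formulas---arising from the denominators introduced when solving the two relations of Lemma~\ref{Lem2.2}---do not conceal extra rational solutions lost by the eliminant.
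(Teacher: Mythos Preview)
Your plan is correct and matches the paper's template almost exactly: in every case one applies Lemma~\ref{Lem2.2}, eliminates $a$, and studies the resulting plane curve, with cases~(5) and~(6) reduced by the reciprocal transformation. Your expectations for cases~(1),~(2),~(3) are accurate (in case~(1) the elliptic curve has rank~$0$ and its torsion forces $p$ to be non-real rather than $a=0$, but the conclusion is the same).

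The one place where the paper is noticeably simpler than your proposal is case~(4). You anticipate needing a substitution $p=tq$ and a Chabauty argument, but in fact the eliminant is
\[p^4+q^4-4p^2q+2q^2+1=0,\]
which can be rewritten as $(p^2-2q)^2+(q^2-1)^2=0$; over $\mathbb{Q}$ both squares must vanish, forcing $q=\pm 1$ and $p^2=\pm 2$, hence no rational solution. So no curve machinery is needed here at all.
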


\begin{proof}[\textbf{Proof of Theorem 3.3.}]
(1) Case $(m,k)=(2,1)$. From Eq. (\ref{Eq21}), we have
\[p^4-3p^2q-ap+q^2+1=0,p^3q-2pq^2-aq+a=0.\]
Solve the above two Diophantine equations with respect to $a$, we
get
\[\frac{p^4-3p^2q+q^2+1}{p}=\frac{pq(p^2-2q)}{q-1}.\]This leads to
\[p^4+p^2q^2-3p^2q-q^3+q^2-q+1=0.\]Solve it for $p$, we have
\[p=\pm\frac{\sqrt{-2q^2+6q+2\sqrt{q^4-2q^3+5q^2+4q-4}}}{2}.\] If $p$ is a rational number,
it needs $r^2=q^4-2q^3+5q^2+4q-4$ and $s^2=-2q^2+6q+2r.$ Let us
consider $r^2=q^4-2q^3+5q^2+4q-4$, this equation is equivalent to
the elliptic curve $Y^2-2XY+8Y=X^3+4X^2+16X+64$ with rank 0 and
torsion points $(X,Y)=(4,16),(-4,0).$ Then $q=\pm1,p=\pm\sqrt{2}i.$
So the result follows.

*(2) Case $(m,k)=(3,1)$. From Eq. (\ref{Eq21}), we have
\[p^4+ap^2-3p^2q-aq+q^2+1=0,p^3q+apq-2pq^2+a=0.\]
Solve the above two Diophantine equations with respect to $a$, we
get
\[-\frac{p^4-3p^2q+q^2+1}{p^2-q}=-\frac{pq(p^2-2q)}{pq+1}.\]
Then
\[p^4-pq^3-3p^2q+pq+q^2+1=0.\]This is an equation about $p,q$ with
degree 4 and genus 3, so it's hard to get the all rational solutions
of it. But we conjecture its rational solution is $(p,q)=(\pm1,1).$
Then $a=\frac{1}{2}$ with $p=q=1.$

(3) Case $(m,k)=(3,2)$. From Eq. (\ref{Eq21}), we have
\[p^4+ap^2-3p^2q-aq+q^2-p=0,p^3q+apq-2pq^2+a-q=0.\]
Solve the above two Diophantine equations with respect to $a$, we
get
\[-\frac{p^4-3p^2q+q^2-p}{p^2-q}=-\frac{q(p^3-2pq-1)}{pq+1}.\]
This leads to
\[p(-q+p-1)(p^2+pq+q^2+p-q+1)=0.\]Solve it for $p$, we get $p=q+1.$ Then $a=-q^2.$

(4) Case $(m,k)=(4,1)$. From Eq. (\ref{Eq21}), we have
\[-ap^3+p^4+2apq-3p^2q+q^2+1=0,-ap^2q+p^3q+aq^2-2pq^2+a=0.\]
Solve the above two Diophantine equations with respect to $a$, we
get
\[\frac{p^4-3p^2q+q^2+1}{p(p^2-2q)}=\frac{pq(p^2-2q)}{p^2q-q^2-1}.\]
Then
\[p^4+q^4-4p^2q+2q^2+1=0.\] Write it as
\[(p^2-2q)^2=-(q^2-1)^2,\]it is easy to see that there is no
rational solution $(p,q)$.
\end{proof}

\begin{theorem}\label{Thm3.4} Let $f_{5,m,k}(1,a,a,x)=x^5+x^m+ax^k+a$, where $5>m>k\geq1$ and $a\neq0.$\\

$(1)$ If $(m,k)=(2,1)$ then $f_{5,2,1}(1,a,a,x)$ is divisible by
$x^2+px+q$ if and only if
\[a=-(p^2-p+1)(p-1)^2,q=p-1;\]
or\[a=\frac{p(p+1)(p^2+p+1)^2}{(2p+1)^2},q=\frac{p(p^2+p+1)}{2p+1}.\] In this case we have
\[\begin{split}f_{5,3,2}&(1,-(p^2-p+1)(p-1)^2,-(p^2-p+1)(p-1)^2,x)\\
&=(x^2+px+p-1)(x^3-px^2-(-p^2+p-1)x-p^3+2p^2-2p+1);\end{split}\] or
\[\begin{split}
f_{5,3,2}&\bigg(1,\frac{p(p+1)(p^2+p+1)^2}{(2p+1)^2},\frac{p(p+1)(p^2+p+1)^2}{(2p+1)^2},x\bigg)\\
&=\bigg(x^2+px+\frac{p(p^2+p+1)}{2p+1}\bigg)\bigg(x^3-px^2+\frac{p^3-p}{2p+1}x+\frac{p^3+2p^2+2p+1}{2p+1}\bigg).\end{split}\]

$(2)$ If $(m,k)=(3,1)$ then $f_{5,3,1}(1,a,a,x)$ is divisible by
$x^2+px+q$ if and only if
\[a=\frac{125}{12},p=-\frac{1}{2},q=\frac{5}{18}.\] In this case we have
\[f_{5,3,2}\bigg(1,\frac{125}{12},\frac{125}{12},x\bigg)=\bigg(x^2-\frac{x}{2}+\frac{5}{18}\bigg)\bigg(x^3+\frac{x^2}{2}+\frac{35x}{36}+\frac{25}{72}\bigg).\]

$(3)$ If $(m,k)=(3,2)$ then $f_{5,3,2}(1,a,a,x)$ is divisible by
$x^2+px+q$ if and only if
\[a=-p^2,q=p^2,\] where $p\neq0$. In this case we have
\[f_{5,3,2}(1,-p^2,-p^2,x)=(x-p)(x^2+1)(x^2+px+p^2).\]

$(4)$ If $(m,k)=(4,1)$ then $f_{5,4,1}(1,a,a,x)$ is divisible by
$x^2+px+q$ if and only if
\[a=\frac{p^4}{2},q=\frac{p^2}{2};~or~a=-(p-1)^4,q=p-1.\] In this case we have
\[f_{5,4,1}\bigg(1,\frac{p^4}{2},\frac{p^4}{2},x\bigg)=(x+1)\bigg(x^2-px+\frac{p^2}{2}\bigg)\bigg(x^2+px+\frac{p^2}{2}\bigg);\]
or\[f_{5,4,1}(1,-(p-1)^4,-(p-1)^4,x)=(x^2+px+p-1)(x-p+1)(x^2+p^2-2p+1).\]

$(5)$ If $(m,k)=(4,2)$ it is the case (2) by the transformations
$f(x)\mapsto x^5f(1/x)$ and $f(x) \mapsto f(x)/a$.

$(6)$ If $(m,k)=(4,3)$ it is the case (1) by the transformations
$f(x)\mapsto x^5f(1/x)$ and $f(x) \mapsto f(x)/a$.
\end{theorem}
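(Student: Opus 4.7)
The plan is to apply the framework of Lemmas \ref{Lem2.1} and \ref{Lem2.2} sub-case by sub-case, exactly as in Theorems \ref{Thm2.3}--\ref{Thm2.6} and \ref{Thm3.1}--\ref{Thm3.3}. After tabulating the first few $A_i(p,q)$ and $B_i(p,q)$ via the recursion (in particular $A_5=p^4-3p^2q+q^2$ and $B_5=p^3q-2pq^2$), for each $(m,k)$ the divisibility $x^2+px+q\mid f_{5,m,k}(1,a,a,x)$ translates into
\[A_5+A_m+a\,A_k=0,\qquad a(1+B_k)=-B_5-B_m,\]
from which $a$ can be eliminated to produce a single polynomial identity $F_{m,k}(p,q)=0$ whose rational solution set must then be determined.

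For the three unstarred cases (1), (3), (4), I expect $F_{m,k}$ to factor over $\mathbb{Q}[p,q]$ and the analysis to reduce to straightforward algebra. In case (1), $F_{2,1}$ is quadratic in $q$, and a Vieta computation of the sum $(p^3+3p^2-1)/(2p+1)$ and product $p(p^3-1)/(2p+1)$ of its roots exhibits the factorization $(q-(p-1))\bigl((2p+1)q-p(p^2+p+1)\bigr)=0$, yielding the two parametric families for $a$. In case (3), $F_{3,2}$ is cubic in $q$ with obvious linear factor $q-p^2$; the residual quadratic $q^2-2q+(p^2+1)$ has discriminant $-4p^2\le 0$, forcing $q=p^2$ and then $a=-p^3$. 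In case (4), after removing an overall factor of $p$, the remaining quadratic in $q$ has discriminant $(p^2-2p+2)^2$, a perfect square, which splits directly into the two rational families $q=p^2/2$ and $q=p-1$.

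The main obstacle is case (2), $(m,k)=(3,1)$, where the eliminant
\[(2p+1)q^2-(p^3+3p^2+p+1)q+p^2(p^2+1)=0\]
does not factor over $\mathbb{Q}[p,q]$. The degenerate sub-case $2p+1=0$ is linear in $q$ and yields directly the exceptional point with $p=-1/2$, $q=5/18$. Otherwise, rationality of $q$ forces $r^2=p^6-2p^5+7p^4+3p^2+2p+1$, a smooth hyperelliptic sextic of genus $2$. The hard step is proving this accounts for the entire rational locus: I plan to verify in Magma that the Jacobian of this curve has rank at most $1$, then invoke Chabauty's method as in Theorem \ref{Thm3.1} to enumerate all finite rational points and check that each lies over $p=-1/2$ or corresponds to degenerate data with $a=0$. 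Should the Jacobian's rank equal or exceed the genus, or should Magma fail to locate the Mordell--Weil generators, only a conjectural statement would follow, as in the starred cases of Theorems \ref{Thm3.1}--\ref{Thm3.3}.

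Finally, cases (5) and (6) follow from (2) and (1) respectively by applying the involutions $f(x)\mapsto x^5 f(1/x)$ and $f(x)\mapsto f(x)/a$, which preserve divisibility by quadratic polynomials and swap the index pairs $(4,2)\leftrightarrow(3,1)$ and $(4,3)\leftrightarrow(2,1)$.
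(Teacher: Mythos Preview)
Your proposal is correct and follows essentially the same route as the paper: in each sub-case you set up the two divisibility equations from Lemma~\ref{Lem2.2}, eliminate $a$, and analyze the resulting relation in $(p,q)$, with the paper's factorizations in cases (1), (3), (4) replaced by equivalent Vieta/discriminant computations, and the same genus-$2$ Chabauty computation in case (2). Your explicit handling of the degenerate sub-case $2p+1=0$ in (2) is a nice touch the paper glosses over, and your value $a=-p^3$ in case (3) agrees with the paper's proof (the statement's $a=-p^2$ is a typo).
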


\begin{proof}[\textbf{Proof of Theorem 3.4.}]
(1) Case $(m,k)=(2,1)$. By Eq. (\ref{Eq21}), we have
\[p^4-3p^2q+q^2+a-p=0,p^3q-2pq^2+a-q=0.\]
Solve the above two Diophantine equations with respect to $a$, we
get
\[-p^4+3p^2q-q^2+p=-p^3q+2pq^2+q.\]This leads to
\[-(-q-1+p)(p^3+p^2-2pq+p-q)=0.\]Solve it for $q$, we have
\[q=p+1;~or~\frac{p(p^2+p+1)}{2p+1}.\] Then
\[a=-(p^2-p+1)(p-1)^2;~or~\frac{p(p+1)(p^2+p+1)^2}{(2p+1)^2}.\]

(2) Case $(m,k)=(3,1)$. By Eq. (\ref{Eq21}), we have
\[p^4-3p^2q+p^2+q^2+a-q=0,p^3q-2pq^2+pq+a=0.\]
Solve the above two Diophantine equations with respect to $a$, we
get
\[-p^4+3p^2q-p^2-q^2+q=-p^3q+2pq^2-pq.\]
Then
\[-p^4+p^3q+3p^2q-2pq^2-p^2+pq-q^2+q=0.\]Solve it for $q$, we get
\[q=\frac{p^3+3p^2+p+1+\sqrt{p^6-2p^5+7p^4+3p^2+2p+1}}{2p+1}.\]If $q$ is a rational number,
it needs $r^2=p^6-2p^5+7p^4+3p^2+2p+1.$ This is a hyperelliptic
sextic curve of genus 2. The rank of the Jacobian variety is 1, and
Magma's Chabauty routines determine the only finite rational points
are \[(r,p)=(\pm1;0),\bigg(\pm\frac{9}{8};-\frac{1}{2}\bigg),\]
which lead to \[(p,q)=\big(-\frac{1}{2},\frac{5}{18}\big).\] Hence,
\[a=\frac{125}{12}.\]

(3) Case $(m,k)=(3,2)$. By Eq. (\ref{Eq21}), we have
\[p^4-3p^2q-ap+p^2+q^2-q=0,p^3q-2pq^2-aq+pq+a=0.\]
Solve the above two Diophantine equations with respect to $a$, we
get
\[\frac{p^4-3p^2q+p^2+q^2-q}{p}=\frac{pq(p^2-2q+1)}{q-1}.\]
This leads to
\[(p^2+q^2-2q+1)(p^2-q)=0.\]Solve it for $q$, we get $q=p^2.$ Then $a=-p^3.$

(4) Case $(m,k)=(4,1)$. By Eq. (\ref{Eq21}), we have
\[p^4-p^3-3p^2q+2pq+q^2+a=0,p^3q-p^2q-2pq^2+q^2+a=0.\]
Solve the above two Diophantine equations with respect to $a$, we
get
\[-p^4+p^3+3p^2q-2pq-q^2=-p^3q+p^2q+2pq^2-q^2.\]
Then
\[-p(-q-1+p)(p^2-2q)=0.\] Solve it for $q$, we get
\[q=\frac{p^2}{2};~or~p-1.\] Then
\[a=\frac{p^4}{2};~or~-(p-1)^4.\]
The result follows.
\end{proof}

\section{Some related questions}
In the end, we raise some questions about the reducibility of
quadrinomials.

\begin{question} Are there infinitely many $f(x)=x^n+ax^m+bx^k+c,$ with $abc\neq0,n>m>k>0,n\geq6$ has a quadratic
factor $g(x)=x^2+px+q$ for
$(a,b,c)=(a,1,1),(a,a,1),(a,1,a),(1,a,a)$~?
\end{question}

\begin{question} If $f(x)=x^n+ax^m+bx^k+c,abc\neq0,n>m>k>0$, whether all the factors of $f(x)$ have the forms $f_i(x)=x^{n_i}+a_ix^{m_i}+b_ix^{k_i}+c_i$ where
$n_i>m_i>k_i>0,a_ib_ic_i\neq0$~?
\end{question}

\begin{question} Can we use the method of infinite descent to prove the only rational
point on the hyperelliptic curve $r^2=p^6+2p^4+p^2+8p$ is $(0,0)$?
\end{question}

\vskip20pt
\bibliographystyle{amsplain}

\end{document}